 \newtheorem{remark}{Remark}
 \newtheorem{proposition}{Proposition}}
\newcommand{\set}[1]{\left\{#1\right\}}
\newcommand{\norm}[1]{\left\Vert#1\right\Vert}
\title{\LARGE \bf
Numerical Discretization Methods \\ for the Discounted Linear Quadratic Control Problem
}
\author{Zhanhao Zhang, Steen Hørsholt, 
John Bagterp Jørgensen
}
\begin{document}

\maketitle
\thispagestyle{empty}
\pagestyle{empty}

\begin{abstract}
This study focuses on the numerical discretization methods for the continuous-time discounted linear-quadratic optimal control problem (LQ-OCP) with time delays. By assuming piecewise constant inputs, we formulate the discrete system matrices of the discounted LQ-OCPs into systems of differential equations. Subsequently, we derive the discrete-time equivalent of the discounted LQ-OCP by solving these systems. This paper presents three numerical methods for solving the proposed differential equations systems: the fixed-time-step ordinary differential equation (ODE) method, the step-doubling method, and the matrix exponential method. Our numerical experiment demonstrates that all three methods accurately solve the differential equation systems. Interestingly, the step-doubling method emerges as the fastest among them while maintaining the same level of accuracy as the fixed-time-step ODE method.
\end{abstract}

\section{Introduction}
\label{sec:Intro}
As one of the fundamental problems within optimal control theory, the linear-quadratic optimal control problem (LQ-OCP) is attractive due to its mathematical tractability and simplicity. This class of problems has extensive applications in engineering, economics, and other fields. However, real-world scenarios often present time delays, significantly influencing system performance and robustness~\cite{Gu2003StabilityOT}. Integrating time delays into LQ-OCPs adds complexity as control inputs depend on both historical and current system states. Considering potential risks and uncertainties in the future, incorporating discounted cost functions becomes valuable for balancing immediate gains against long-term losses when devising control strategies~\cite{fu2018risk}. The continuous nature of these problems, coupled with the complexities introduced by time delays and discounted cost functions, can render their implementation infeasible in real-world scenarios. Therefore, there arises a necessity for discretization techniques to facilitate the practical implementation of discounted LQ-OCPs with time delays.

There is rich research on the solution and discretization methods of undiscounted optimal control problems~\cite{Hendricks2008LCD,Goebel2007LQRwithControlConstraint,BAGTERPJORGENSEN2012187,Dontchev2001TheEulerApproximation,Alt2013ApproximationsofLQ}. The discretization of continuous-time undiscounted LQ-OCPs without time delays has been explored extensively in the literature~\cite{Cullum1969DiscreteApproximationstoContinuoustimeOCP,Dontchev2001TheEulerApproximation,Han2010ConvergenceofDiscretetime,Alt2013ApproximationsofLQ}. Furthermore, recent studies have introduced numerical discretization methods for deterministic and stochastic LQ-OCPs with time delays~\cite{zhaz2023LQDiscretization,zhaz2023LQDiscretizationWithDelays}. 

On the other hand, the discounted cost approach is popular when considering trade-offs between the present and future costs or rewards in control strategies, e.g., reinforced learning~\cite{wang2022system} and risk-sensitive optimal control problems~\cite{jacobson1973optimal}. The discount factor is a key component in these problems. In~\cite{wang2022system,granzotto2020finite}, authors have investigated the influence of various discount factors on the stability of the discounted optimal control problems for both linear and nonlinear systems. Besides, the 
discounted LQ-OCPs are related to other control algorithms, such as model predictive control (MPC)~\cite{grune2015using} and linear-quadratic Gaussian (LQG)~\cite{Hansen1995DiscountedLinearExp,mena2022discounted}. 
However, as far as we know, the existing literature has not explored the optimal control problems incorporating with both the discounted cost function and linear time-delay systems. Therefore, in this paper, we would like to investigate the discretization of the continuous-time discounted LQ-OCPs subject to time-delay systems. The key problems that we address in this paper: 
\begin{itemize}
    \item [1.] Formulation of differential equations systems for the discretization of discounted LQ-OCPs with and without time delays 
    \item [2.] Numerical methods for solving the resulting systems of differential equations.
\end{itemize}
This paper is organized as follows. Section~\ref{sec:FormulationoftheProblem} introduces the formulation of discounted LQ-OCPs with and without time delays and describes the differential equation systems for LQ discretization. In Section~\ref{sec:NumericalMethods}, we present three numerical methods for solving proposed systems of differential equations. Section~\ref{sec:NumericalExperiments} illustrates a numerical experiment that tests and compares the three numerical methods, and the conclusions are given in Section~\ref{sec:Conclusion}.

\section{Formulation of the Problems}
\label{sec:FormulationoftheProblem}
\subsection{Discounted linear-quadratic optimal control problem}
Consider the following LQ-OCP with a discounted cost function 
\begin{subequations}
\label{eq:ContinuousTime-DiscountedLQOCP}
\begin{alignat}{5}
& \min_{x,u,z,\tilde z} \quad &&\phi = \int_{t_0}^{t_0+T} l_c(\tilde z(t)) dt \\
& s.t. && x(t_0) = \hat x_0, \\
& && u(t) = u_k, \quad &&t_k \leq t < t_{k+1}, \, k \in \mathcal{N}, \\
& && \dot x(t) = A_c x(t) + B_c u(t), \quad && t_0 \leq t < t_0+T, \\
& && z(t) = C_c x(t) + D_c u(t), \, && t_0 \leq t < t_0+T, \\
& && \bar z(t) = \bar z_k,  &&t_k \leq t < t_{k+1}, \, k \in \mathcal{N}, \\
& && \tilde z(t) = z(t) - \bar z(t), && t_0 \leq t < t_0+T,
\end{alignat}
\end{subequations}
with the stage cost function
\begin{equation}
\begin{split}
    l_c(\tilde z(t)) &= \frac{1}{2} e^{-\mu t} \norm{ W_z \tilde z(t)  }_2^2
    = \frac{1}{2} e^{-\mu t} \left( \tilde z(t) ' Q_{c} \tilde z(t) \right), 
\end{split}
\label{eq:det-stagecost}
\end{equation}
where $x\in \mathbb{R}^{n_x \times 1}$ and $u \in \mathbb{R}^{n_u \times 1}$ are the state and input. $z \in \mathbb{R}^{n_z \times 1}$ and $\bar z \in \mathbb{R}^{n_z \times 1}$ are the output and its reference. The discount factor $\mu \in \mathbb{R}^+$ is a positive constant and the weight matrix $Q_{c} = W_z' W_z$ is semi-positive definite. 

Note that the above problem is in continuous-time with decision variables $x(t),u(t),z(t)$, and $\Tilde{z}(t)$. The control horizon $T = N T_s$ with sampling time $T_s$ and $N \in \mathbb{Z}^+$, and $\mathcal{N}=0,1,\ldots, N-1$. We assume piecewise constant input and output target, $u(t) = u_k$ and $\bar z(t) = \bar{z}_k$ for $t_k \leq t < t_{k+1}$. The corresponding discrete-time equivalent of~\eqref{eq:ContinuousTime-DiscountedLQOCP} is 
\begin{subequations}
\label{eq:DiscreteTime-DiscountedLQOCP}
\begin{alignat}{5}
& \min_{ x,u} \quad &&\phi = \sum_{k\in \mathcal{N}}
l_k(x_k,u_k)  \\
& s.t. && x_0 = \hat x_0, \\
& && x_{k+1} = A x_k + B u_k, \quad && k \in \mathcal{N},
\end{alignat}    
\end{subequations} 
with the stage costs
\begin{equation}
    l_k(x_k,u_k) = \frac{1}{2} \begin{bmatrix} x_k \\ u_k \end{bmatrix}' Q_k \begin{bmatrix} x_k \\ u_k \end{bmatrix} + (\overbrace{M_k \bar z_k}^{q_k})' \begin{bmatrix} x_k \\ u_k \end{bmatrix} + \rho_k, \, k \in \mathcal{N},
\label{eq:stageCost-noTimeDelay}
\end{equation}
where the quadratic and linear term matrices are $Q_k = e^{-\mu t_k}Q$ and $M_k = e^{- \mu t_k}M$. The constant term $\rho_k$ is 
\begin{equation}
    \rho_k = \frac{1}{2} \int_{t_k}^{t_{k+1}} e^{-\mu t} \bar z_k' Q_c \bar z_k dt = \frac{ e^{-\mu t_k} (1-e^{-\mu T_s})}{2 \mu} \bar z_k' Q_c \bar z_k.
\end{equation}    

\begin{proposition}[Discretization of discounted LQ-OCPs] The system of differential equations
\label{prop:DiscretizationoftheLQOCP}
\begin{subequations}
\label{eq:LQDiscretization-MatrixODEsystem}
\begin{alignat}{3}
    \dot A(t) &= A_c A(t), \qquad && A(0) = I, \\
    \dot B(t) &= A(t) B_c, \qquad && B(0) = 0, \label{eq:B_nonDelay} \\
    \dot H_q(t) & = H_{cq} H_q(t), && H_q(0) = I_{xu}, \\
    \dot H_m(t) & = H_{cm} H_m(t), && H_m(0) = I_{xu}, \\
    \dot Q(t) &= H_q(t)' \bar Q_{c} H_q(t), \qquad && Q(0) = 0,  \\
    \dot M(t) &= H_m(t)' \bar M_{c}, \qquad && M(0) = 0,
\end{alignat}
where $I_{xu}$ is an identity matrix with size $n_{xu}= n_x + n_u$ and 
\begin{align}
    & H_c = \begin{bmatrix}
        A_c & B_c \\ 0 & 0
    \end{bmatrix}, 
    \: \: H_{cq} = H_c - \frac{\mu}{2} I_{xu}, 
    \: \: H_{cm} = H_c - \mu I_{xu},
    \\
    & \qquad \bar M_c =  - \begin{bmatrix} C_c & D_c \end{bmatrix}' Q_c,
    \qquad \bar Q_c = - \bar M_c \begin{bmatrix} C_c & D_c \end{bmatrix}, 
\label{eq:gamma(t)}
\end{align}
\end{subequations}
may be used to compute the discrete-time system matrices ($A = A(T_s)$, $B=B(T_s)$, $Q=Q(T_s)$, $ M=M(T_s)$) of discounted LQ-OCPs without time delays. 
\end{proposition}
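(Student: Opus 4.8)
The plan is to derive the discrete-time matrices directly by integrating the continuous-time dynamics over one sampling interval $[t_k, t_{k+1}]$ under the piecewise-constant input assumption, and then to recognize that each resulting closed-form expression is exactly the time-$T_s$ value of the solution to the corresponding initial-value problem in \eqref{eq:LQDiscretization-MatrixODEsystem}. First I would treat the state dynamics: the variation-of-constants formula applied to $\dot x = A_c x + B_c u_k$ on $[t_k,t_{k+1}]$ gives $x_{k+1} = e^{A_c T_s} x_k + \bigl(\int_0^{T_s} e^{A_c s}\,ds\bigr) B_c u_k$, so $A = e^{A_c T_s}$ and $B = \int_0^{T_s} e^{A_c s}B_c\,ds$. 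One checks that $A(t) = e^{A_c t}$ solves $\dot A = A_c A$, $A(0)=I$, and $B(t)=\int_0^t e^{A_c s}B_c\,ds$ solves $\dot B = A(t)B_c$, $B(0)=0$; hence $A=A(T_s)$ and $B=B(T_s)$. Equivalently, I would note $\begin{bmatrix} x_{k+1} \\ u_k \end{bmatrix}$-type augmented dynamics are governed by $H_c$, so that $[A(t)\ B(t)]$ is the top block of $e^{H_c t}$, which will be convenient for the cost terms.

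Next I would handle the stage cost. Substituting $z(t) = C_c x(t) + D_c u(t) = \begin{bmatrix} C_c & D_c\end{bmatrix}\begin{bmatrix} x(t) \\ u_k\end{bmatrix}$ and $\tilde z = z - \bar z_k$ into \eqref{eq:det-stagecost} and integrating over $[t_k,t_{k+1}]$, the stage cost splits into a quadratic term in $\begin{bmatrix} x_k \\ u_k\end{bmatrix}$, a linear (cross) term involving $\bar z_k$, and the constant $\rho_k$ already recorded. The key observation is that on $[t_k,t_{k+1}]$ the augmented vector satisfies $\begin{bmatrix} x(t_k+s) \\ u_k\end{bmatrix} = e^{H_c s}\begin{bmatrix} x_k \\ u_k\end{bmatrix}$, and the factor $e^{-\mu t} = e^{-\mu t_k}e^{-\mu s}$ pulls out the $e^{-\mu t_k}$ that produces $Q_k = e^{-\mu t_k}Q$ and $M_k = e^{-\mu t_k}M$. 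The remaining $s$-integrals are
\[
Q = \int_0^{T_s} e^{-\mu s}\, e^{H_c' s}\begin{bmatrix} C_c & D_c\end{bmatrix}' Q_c \begin{bmatrix} C_c & D_c\end{bmatrix} e^{H_c s}\,ds, \qquad
M = -\int_0^{T_s} e^{-\mu s}\, e^{H_c' s}\begin{bmatrix} C_c & D_c\end{bmatrix}' Q_c\,ds.
\]
Folding the scalar $e^{-\mu s}$ into the exponentials gives $e^{-\mu s}e^{H_c s} = e^{(H_c - \mu I_{xu})s} = e^{H_{cm}s}$ in the linear term and $e^{-\mu s}e^{H_c' s}(\cdot)e^{H_c s} = e^{H_{cq}' s}(\cdot)e^{H_{cq} s}$ with $H_{cq} = H_c - \tfrac{\mu}{2}I_{xu}$ in the quadratic term, since $\tfrac{\mu}{2}$ appears symmetrically on the left and right. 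Recognizing $H_q(t) = e^{H_{cq}t}$ and $H_m(t) = e^{H_{cm}t}$ as the solutions of the stated matrix ODEs, and using $\bar Q_c$, $\bar M_c$ as defined in \eqref{eq:gamma(t)}, the integrands for $\dot Q$ and $\dot M$ match exactly, so by the fundamental theorem of calculus $Q = Q(T_s)$ and $M = M(T_s)$.

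The main obstacle — really the only subtle point — is the bookkeeping that turns the discount factor $e^{-\mu t}$ into the shifts $H_{cq} = H_c - \tfrac{\mu}{2}I_{xu}$ and $H_{cm} = H_c - \mu I_{xu}$: one must verify that splitting $e^{-\mu s}$ as $e^{-\mu s/2}\cdot e^{-\mu s/2}$ and absorbing one half into each exponential factor in the quadratic form is legitimate, which works precisely because $\tfrac{\mu}{2}I_{xu}$ is a scalar multiple of the identity and therefore commutes with $H_c$. I would also remark that the matrices $H_q$, $H_m$ can alternatively be obtained as blocks of $e^{H_{cq}t}$ and $e^{H_{cm}t}$, tying this back to the matrix-exponential method announced in the introduction, and I would note that the identification of each ODE solution with the corresponding matrix exponential is justified by uniqueness of solutions to linear constant-coefficient matrix ODEs. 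Everything else is routine substitution and the variation-of-constants formula, so the proof is essentially a verification that the six ODEs reproduce these four closed-form integrals at $t = T_s$.
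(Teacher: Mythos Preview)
Your proposal is correct and follows essentially the same route as the paper: both express the augmented trajectory via $e^{H_c s}$, split $e^{-\mu s}=e^{-\mu s/2}\cdot e^{-\mu s/2}$, and absorb the scalar factors into the exponentials using the commutativity of $\tfrac{\mu}{2}I_{xu}$ with $H_c$ to arrive at $H_q(t)=e^{H_{cq}t}$ and $H_m(t)=e^{H_{cm}t}$, from which the ODEs for $Q$ and $M$ follow immediately. Your write-up is in fact more explicit than the paper's in extracting the $e^{-\mu t_k}$ prefactor and in flagging the commutativity as the key enabling observation.
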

\begin{proof}
    We can compute discrete-time state space matrices by $A(t)=e^{A_ct}$, $B(t)=\int_{0}^{t} A(s) B_c ds$. The solution of the state space model thus can be defined as 
    \begin{equation}
        \Gamma(t) = \begin{bmatrix}
            A(t) & B(t) \\ 0 & I 
        \end{bmatrix} = e^{H_c t}, \; \;  z(t) = \begin{bmatrix}
            C_c & D_c 
        \end{bmatrix} \Gamma(t) \begin{bmatrix}
            x_k \\ u_k
        \end{bmatrix}.
    \label{eq:solutionOfStateSpace}
    \end{equation}
    Moreover, as $e^{-\mu t}$ is a scalar and $e^{-\mu t} = {e^{-\frac{\mu}{2}t}}' e^{-\frac{\mu}{2}t}$, we obtain the following expressing by replacing $z(t)$ with~\eqref{eq:solutionOfStateSpace} in $l_c(\tilde z(t))$
    \begin{subequations}
    \label{eq:HqHmQandM_nonDelay}
        \begin{align}
            & H_{q}(t) = \Gamma(t) e^{-\frac{\mu}{2}t} = e^{H_{cq}t}, \;\;\; H_{m}(t) = \Gamma(t) e^{-\mu t} = e^{H_{cm} t}, \label{eq:HqHm_nonDelay}
            \\
            & Q(t) = \int_{0}^{t} e^{-\mu s} {e^{H_cs}}' \bar{Q}_c e^{H_cs} ds = \int_0^{t} {e^{H_{cq}s}}' \bar Q_c e^{H_{cq}s} ds, 
            \\
            & M(t) = \int_{0}^{t} e^{-\mu s} {e^{H_cs}}' \bar{M}_c ds = \int_0^{t}  {e^{H_{cm}s}}' \bar M_c ds.
        \end{align}
    \end{subequations}
\end{proof}
\begin{remark} \label{remark:MatrixDiscountFactor}
Note that when the discount factor $\mu$ becomes a diagonal matrix $\mathcal{M}=\text{diag}(\mu_1,\mu_2,\ldots,\mu_{n_z})$ for $l_c(\tilde z(t))=0.5\tilde z(t)' (e^{- \mathcal{M}t}Q_c) \tilde z(t)$, the corresponding system of differential equations~\eqref{eq:LQDiscretization-MatrixODEsystem} becomes
\begin{subequations}
\begin{alignat}{3}
    \dot A(t) &= A_c A(t), \qquad && A(0) = I, \\
    \dot B(t) &= A(t) B_c, \qquad && B(0) = 0, \\
    \dot H_q(t) & = H_q(t) H_{c} - 0.5\mathcal{M}H_q(t), \quad && H_q(0) = \begin{bmatrix}
        C_c & D_c
    \end{bmatrix}, \\
    \dot H_{m}(t) & = H_m(t) H_{c} - \mathcal{M}H_m(t), \quad && H_m(0) = \begin{bmatrix}
        C_c & D_c
    \end{bmatrix},
    \\
    \dot Q(t) &= H_q(t)' Q_c H_q(t), \qquad && Q(0) = 0,  \\
    \dot M(t) &= -H_m(t)' Q_c, \qquad && M(0) = 0,
\end{alignat}
\end{subequations}
where $H_q = e^{-0.5\mathcal{M}t} [ C_c \; \; D_c] e^{H_c t}$ and $H_m = e^{-0.5\mathcal{M}t} H_q$. 
\end{remark}
The differential equations described in Remark~\ref{remark:MatrixDiscountFactor} allow us to solve the discrete system matrices using standard ODE methods such as Euler and Runge-Kutta methods. In this paper, we only consider the case $e^{-\mu t}$ as a scalar.

\subsection{Discounted linear-quadratic optimal control problem with time delays}
Consider the following discounted LQ-OCP subject to a time-delay system 
\begin{subequations}
\label{eq:ContinuousTime-DiscountedLQOCP-TimeDelay}
\begin{alignat}{5}
& \min_{x,u,z,\tilde z} \: \: &&\phi = \int_{t_0}^{t_0+T} l_c(\tilde z(t)) dt 
\\
& s.t. && x(t_0) = \hat x_0, 
\\
& && u(t) = u_k, \: && t_k \leq t < t_{k+1}, \, k \in \mathcal{N}, 
\\
& && \dot x(t) = A_c x(t) + B_c u(t-\tau),  && t_0 \leq t < t_0+T, \label{eq:timeDelayStateSpace01}
\\
& && z(t) = C_c x(t) + D_c u(t-\tau), && t_0 \leq t < t_0+T, 
\label{eq:timeDelayStateSpace02}
\\
& && \bar z(t) = \bar z_k,  &&t_k \leq t < t_{k+1}, \, k \in \mathcal{N}, 
\\
& && \tilde z(t) = z(t) - \bar z(t), && t_0 \leq t < t_0+T,
\end{alignat}
\end{subequations}
where $\tau \in \mathbb{R}^{+}_0$ is the time delay. The stage cost function $l_c(\tilde z(t))$ is identical to~\eqref{eq:det-stagecost}. Note that when considering the MIMO system with multiple time delays and the system dynamic equations~\eqref{eq:timeDelayStateSpace01} and~\eqref{eq:timeDelayStateSpace02} become
\begin{subequations}
\label{eq:Deterministic-ContinuousTimeDelayStateSpaceMIMO}
    \begin{align} 
        \dot x_{ij}(t) &= A_{c,ij} x_{ij}(t) + B_{c,ij} u_j(t - \tau_{ij}), 
        \\ 
        z_{ij}(t) &= C_{c,ij} x_{ij}(t) + D_{c,ij} u_j(t - \tau_{ij}). 
    \end{align}
\begin{align}
    & \quad u=[u_1;u_2;\ldots,u_{n_u}],  &&x=[x_{11};x_{21};\ldots;x_{n_zn_u}],
    \\
    & \quad z=[z_{1};z_{2};\ldots;z_{n_z}], && z_i = \sum_{j=1}^{n_u} z_{ij},
\end{align}
\end{subequations}
where $A_{c,ij}$, $B_{c,ij}$, $C_{c,ij}$, $D_{c,ij}$ and $\tau_{ij}$ for $i=1,2,\ldots,n_z$ and $j = 1,2,\ldots, n_u$ are parameters of the $[i, j]$ SISO system describing the dynamics from the $j^{th}$ input to the $i^{th}$ output. 

Based on~\cite{zhaz2023LQDiscretizationWithDelays}, the discrete-time equivalent of~\eqref{eq:Deterministic-ContinuousTimeDelayStateSpaceMIMO} is 
\begin{subequations}
\begin{align}
    \overbrace{ \left [
                    \begin{array}{c}
                        x_{k+1} \\ \hdashline[2pt/2pt]
                        u_{o,k+1}
                    \end{array}  \right]  }^{=\Tilde{x}_{k+1}} 
    &= \overbrace{ 
         \left[
            \begin{array}{c;{2pt/2pt}c}
                A & B_{o,1} \\ \hdashline[2pt/2pt]
                0 & I_A
            \end{array}
         \right] }^{=\Tilde{A}}
        \overbrace{ \left [
                    \begin{array}{c}
                        x_k \\ \hdashline[2pt/2pt]
                        u_{o,k}
                    \end{array}  \right]  }^{=\Tilde{x}_{k}}
        + 
        \overbrace{
             \left[
                \begin{array}{c}
                    B_{o,2} \\ \hdashline[2pt/2pt]
                    I_B
                \end{array}
             \right] }^{=\Tilde{B}} u_k
        , \\ 
    z_k &= \overbrace{
            \left[
                \begin{array}{c;{2pt/2pt}c}
                    C_c & D_{o,1}
                \end{array}
             \right]
            }^{=\Tilde{C}} \Tilde{x}_k + 
            \overbrace{D_{o,2}}^{=\Tilde{D}} u_k,
\end{align} 
\label{eq:Deterministic-DiscreteTimeDelayStateSpaceSISO}%
\end{subequations}
where $u_{o,k}=[u_{k-\bar m},u_{k-\bar m+1},\ldots,u_{k-1}]$ is the historical input vector. $\bar m = \max \set{m_{ij}} $ indicates the maximum integer time delay constant with $\frac{\tau_{ij}}{T_s}=m_{ij} - v_{ij}$, where $m_{ij}\in \mathbb{Z}_0^+$ and $ 0 \leq v_{ij} < 1$ are integer and fractional time delay constants.

The discrete-time system matrices of~\eqref{eq:Deterministic-DiscreteTimeDelayStateSpaceSISO} are
\begin{subequations}
    \begin{align}
        & \bar{C}_j =\text{diag}(C_{c,1j}, C_{c,2j}, \ldots,C_{c,n_zj}), \: \:
        \text{for} \: \: j=1,2,\ldots,n_u,
        \\
        & \bar{D}_{c,i} = \sum_{j=1}^{n_u} D_{c,ij} e_j E_{\bar m+1}^{m_{ij}}, \qquad \qquad \: \, \: \: \text{for} \: \: i=1,2,\ldots,n_z.
        \\
        & C_c = [\bar C_1,\bar C_2,\ldots,\bar C_{n_u}], 
        \qquad \quad \; \; \:
        D_o = [\bar D_{1}; \bar D_2; \cdots; \bar D_{n_z}],
        \\
        & B_{o,1} = B_o(:, 1:end-n_u),  
        \qquad  B_{o,2} = B_o(:, \bar{m} n_u:end),
        \\
        & D_{o,1} = D_o(:, 1:end-n_u), 
        \quad \; \; \: D_{o,2} = D_o(:, \bar{m} n_u:end),
        \\
        & I_A = 
        \begin{bmatrix}
            0 & I & \ldots & 0 \\
            \vdots & \vdots & \ddots & \vdots \\
            0 & 0 & \ldots & I \\
            0 & 0 & \ldots & 0 
        \end{bmatrix},
        \qquad \quad  I_B = 
        \begin{bmatrix}
            0 \\ \vdots \\ 0 \\  I 
        \end{bmatrix}, \label{eq:IAandIB}
    \end{align}
\end{subequations}
and the matrices $A$ and $B_o = B_{1}+B_{2}$ are
\begin{subequations}
    \begin{align}
        & A_c  = \text{diag}(A_{c,11},A_{c,21},\ldots,A_{c,n_zn_u}), \quad 
        A(t) = e^{A_c t},
        \\
        & V=\text{diag}(V_{11},V_{21},\ldots,V_{n_zn_u}), \qquad \quad \; \; \; V_{ij} = I v_{ij},
        \\
        & B_{1c}  = \left[B_{1c,11}; B_{1c,21}; \ldots; B_{1c,n_zn_u}\right],
        B_{1c,ij} = B_c e_j E_{\bar m+1}^{m_{ij}},
        \\
        & B_{2c}  = \left[B_{2c,11}; B_{2c,21}; \ldots; B_{2c,n_zn_u}\right],
        B_{2c,ij} = B_c e_j E_{\bar m+1}^{m_{ij}+1},
        \\
        & B_{1}=\int_0^{T_s} e^{A_c t} B_{1c} dt, \; \; \; \;B_{2}= V \int_0^{T_s} e^{V A_c t} (B_{2c}-B_{1c}) dt,
    \end{align}
\label{eq:AandB1andB2}
\end{subequations}
where $e_j=[0,\ldots,1,\ldots,0]$ and $E_{\bar m+1}^{p}=[0,\ldots,I,\ldots,0]$ for $p = 1,2,\ldots,\bar{m}+1$ are the unit vector and matrix for selecting the $j^{th}$ historical input from the augmented input vector $\tilde u_k=[u_{o,k}; u_k]$ such that $u_{j,k-(\bar m+1)+p}=e_j E^p_{\bar m+1} \tilde u_k$ .

The discrete equivalent of the discounted LQ-OCP with time delays~\eqref{eq:ContinuousTime-DiscountedLQOCP-TimeDelay} has the same expressions as the non-delay case introduced in~\eqref{eq:DiscreteTime-DiscountedLQOCP} and~\eqref{eq:stageCost-noTimeDelay}, except that the system parameters become $\tilde x$, $\tilde A$, $\tilde B$, $\tilde C$, $\tilde D$. 

\begin{proposition}[Discretization of discounted LQ-OCPs with time delays] The system of differential equations
\label{prop:DiscretizationoftheLQOCPWithTimeDelays}
\begin{subequations}
\label{eq:TimeDelayLQDiscretization-MatrixODEsystem}
\begin{alignat}{3}
    \dot A(t) &= A_c A(t), \qquad && A(0) = I, \\
    \dot A_v(t) &= VA_c A_v(t), \qquad && A_v(0) = I, \\
    \dot B_1(t) &= A(t) B_{1c}, \qquad && B_1(0) = 0, \\
    \dot B_2(t) &= A_v(t) \bar{B}_{2c}, \qquad && B_2(0) = 0, \\
    \dot H_q(t) & = H_{cq} H_q(t), && H_q(0) = I_{h}, \\
    \dot H_m(t) & = H_{cm} H_m(t), && H_m(0) = I_{h}, \\
    \dot Q(t) &= \Gamma_q(t)' \bar Q_{c} \Gamma_q(t), \qquad && Q(0) = 0,  \\
    \dot M(t) &= \Gamma_m(t)' \bar M_{c}, \qquad && M(0) = 0,
\end{alignat}
\end{subequations}
where $I_{h} = \text{diag}(I_{xu}, I_{xu}, I_{xu})$ is an identity matrix and 
\begin{subequations}
\label{eq:gamma(t)_timedelay}
\begin{align}
    H_{cq} &  =  H_c - \frac{\mathcal{\mu}}{2} I_h, \,
    H_{cm} = H_c - \mathcal{\mu} I_h, \,
    \bar{B}_{2c} = V(B_{2c} - B_{1c}),
    \\
    &  E_1 = \left[I_{xu}, I_{xu}, -I_{xu}  \right],
    \qquad  E_2 =  \left[I_{xu};  I_{xu}; I_{xu}  \right],
    \\
    & \Gamma_m(t) = E_1 H_m(t) E_2, \qquad  \bar M_c = - \begin{bmatrix} C_c & D_o \end{bmatrix} Q_c,
    \\
    & \Gamma_q(t) = E_1 H_q(t) E_2, \qquad \; \; \, \bar Q_c = - \bar{M}_c \begin{bmatrix} C_c & D_o \end{bmatrix}, 
\end{align}
\end{subequations}
may be used to compute ($A = A(T_s)$, $B_o=B_1(T_s)+B_2(T_s)$, $Q=Q(T_s)$, $ M=M(T_s)$) of discounted LQ-OCPs with time delays. 
\end{proposition}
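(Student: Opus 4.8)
The plan is to mirror the two-stage argument behind Proposition~\ref{prop:DiscretizationoftheLQOCP}: first recover the discrete-time state-space data $(A,B_o)$ of the augmented delay model~\eqref{eq:Deterministic-DiscreteTimeDelayStateSpaceSISO}--\eqref{eq:AandB1andB2} from the first four equations in~\eqref{eq:TimeDelayLQDiscretization-MatrixODEsystem}, and then reproduce the discounted stage-cost matrices $(Q,M)$ from the remaining four, reusing the device that turned the scalar $e^{-\mu t}$ into a product of matrix exponentials.

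For the first stage I would integrate the linear matrix ODEs in closed form. From $\dot A(t)=A_cA(t)$, $A(0)=I$ one gets $A(t)=e^{A_ct}$, and likewise $A_v(t)=e^{VA_ct}$; substituting these into the $B_1$ and $B_2$ equations gives $B_1(t)=\int_0^t e^{A_cs}B_{1c}\,ds$ and $B_2(t)=\int_0^t e^{VA_cs}\bar B_{2c}\,ds$. Since $V$ is block-scalar it commutes with the block-diagonal $A_c$, so $V$ pulls out of the second integral and $B_2(T_s)=V\int_0^{T_s}e^{VA_cs}(B_{2c}-B_{1c})\,ds$; comparing with~\eqref{eq:AandB1andB2} this shows $A=A(T_s)$ and $B_o=B_1(T_s)+B_2(T_s)$, i.e.\ exactly the blocks assembled into $\tilde A$ and $\tilde B$ in~\eqref{eq:Deterministic-DiscreteTimeDelayStateSpaceSISO}.

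For the second stage I would first observe that $\frac{\mu}{2}I_h$ and $\mu I_h$ are scalar multiples of the identity, hence commute with the block matrix $H_c$---now of size $3n_{xu}\times3n_{xu}$ and arranged so that the $3\times3$ block structure of $e^{H_ct}$ carries $A(t)$, $A_v(t)$, $B_1(t)$, $B_2(t)$---so that $H_q(t)=e^{H_{cq}t}=e^{-\frac{\mu}{2}t}e^{H_ct}$ and $H_m(t)=e^{H_{cm}t}=e^{-\mu t}e^{H_ct}$, exactly as in~\eqref{eq:HqHm_nonDelay}. The selector matrices $E_1=[I_{xu},I_{xu},-I_{xu}]$ and $E_2=[I_{xu};I_{xu};I_{xu}]$ are chosen so that $E_1e^{H_ct}E_2$ collapses these bundled blocks into the single augmented transition matrix $\Gamma(t)$ whose image under $[C_c\ D_o]$ is the output $z(t_k+t)$, $0\le t<T_s$, evaluated along the restarted dynamics; in particular the $B_1(t)+B_2(t)$ combination is reconstituted here, with the $-I_{xu}$ block absorbing the $B_{2c}-B_{1c}$ correction carried inside the $B_2$ equation. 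Consequently $\Gamma_q(t)=E_1H_q(t)E_2=e^{-\frac{\mu}{2}t}\Gamma(t)$ and $\Gamma_m(t)=E_1H_m(t)E_2=e^{-\mu t}\Gamma(t)$.

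The rest is then routine: substituting $z(t_k+s)=[C_c\ D_o]\Gamma(s)[\tilde x_k;u_k]$ and $\bar z(t)=\bar z_k$ into $l_c(\tilde z(t))$, expanding the quadratic, and using $e^{-\mu s}=(e^{-\frac{\mu}{2}s})'e^{-\frac{\mu}{2}s}$ writes $\int_{t_k}^{t_{k+1}}l_c(\tilde z(t))\,dt$ in the form~\eqref{eq:stageCost-noTimeDelay} with $Q_k=e^{-\mu t_k}\int_0^{T_s}\Gamma_q(s)'\bar Q_c\Gamma_q(s)\,ds$, $M_k=e^{-\mu t_k}\int_0^{T_s}\Gamma_m(s)'\bar M_c\,ds$, and the same $\rho_k$ as before; differentiating the inner integrals yields $\dot Q(t)=\Gamma_q(t)'\bar Q_c\Gamma_q(t)$, $\dot M(t)=\Gamma_m(t)'\bar M_c$ with $Q(0)=0$, $M(0)=0$, whence $Q=Q(T_s)$, $M=M(T_s)$. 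The genuine obstacle is the structural identity $E_1e^{H_ct}E_2=\Gamma(t)$: establishing it means reconciling the compact bundled ODE with the integer/fractional-delay bookkeeping and the $B_1/B_2$ split inherited from~\cite{zhaz2023LQDiscretizationWithDelays}, while everything involving the discount factor is a line-by-line transcription of the proof of Proposition~\ref{prop:DiscretizationoftheLQOCP}.
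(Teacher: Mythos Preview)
Your proposal is correct and follows essentially the same route as the paper: the paper makes the structural identity $E_1 e^{H_c t} E_2 = \Gamma(t)$ explicit by taking $H_c=\text{diag}(H_{1c},H_{2c},H_{3c})$ block-diagonal and writing $\Gamma(t)=e^{H_{1c}t}+e^{H_{2c}t}-e^{H_{3c}t}$, the three exponentials carrying $(A,B_1)$, $(A_v,B_2)$, and $(A_v,0)$ respectively, after which the discount-factor argument is exactly the transcription you describe. One small correction to your picture: the $-I_{xu}$ in $E_1$ subtracts the third block to cancel the duplicate $A_v(t)$ in the top-left and the surplus $I$ in the bottom-right, not to absorb the $B_{2c}-B_{1c}$ correction---that correction is already inside $\bar B_{2c}$ and hence inside $H_{2c}$ itself.
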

\begin{proof}
In~\cite{zhaz2023LQDiscretizationWithDelays}, the solution of the time-delay state space~\eqref{eq:Deterministic-ContinuousTimeDelayStateSpaceMIMO} are defined as 
\begin{equation}
    z(t) = \begin{bmatrix}
        C_c & D_o
    \end{bmatrix} \Gamma(t) \begin{bmatrix}
        x_k \\ \tilde u_k 
    \end{bmatrix}, \quad \Gamma(t) = \begin{bmatrix} A(t) & B_o(t) \\ 0 & I \end{bmatrix},
\label{eq:solutionOfStateSpaceTimeDelayed}
\end{equation}
where $A(t)=e^{A_c t}$ and $B_o(t)=B_1(t)+B_2(t)$ are described in~\eqref{eq:AandB1andB2}. In this case, we cannot directly express $\Gamma(t)$ as a matrix exponential like the non-delay case introduced in Proposition~\ref{prop:DiscretizationoftheLQOCP} since $B_o$ consists of $B_1(t)$ and $B_2(t)$. However, we can decompose it into the linear combination of $A$, $A_v$, $B_1$ and $B_2$ as
\begin{equation}
    \begin{split}
        \Gamma(t) & = \overbrace{\begin{bmatrix}
            A(t) & B_1(t) \\ 0 & I 
        \end{bmatrix}}^{H_1(t)} + \overbrace{\begin{bmatrix}
            A_v(t) & B_2(t) \\ 0 & I 
        \end{bmatrix}}^{H_2(t)} - \overbrace{\begin{bmatrix}
            A_v(t) & 0 \\ 0 & I 
        \end{bmatrix}}^{H_3(t)}
        \\
        & = e^{H_{1c}t} + e^{H_{2c}t} - e^{H_{3c}t}
        \\
        & = E_1 H(t) E_2,
    \end{split}
\end{equation}    
with $H_c=\text{diag}\left(H_{1c},H_{2c},H_{3c}\right)$ and 
\begin{equation}
    H_{1c}= \begin{bmatrix}
        A_c & B_{1c} \\ 0 & 0 
    \end{bmatrix},
    H_{2c} = \begin{bmatrix}
        VA_{c} & B_{2c} \\ 0 & 0 
    \end{bmatrix},
    H_{3c} = \begin{bmatrix}
        VA_{c} & 0 \\ 0 & 0 
    \end{bmatrix},
\end{equation}
where $H(t)=e^{H_ct}=\text{diag}\left(H_{1}(t),H_{2}(t),H_{3}(t)\right)$ and $H_k(t)=e^{H_{kc}t}$ for $k=1,2,3$. 

Consequently, as $e^{-\mu t}={e^{-\frac{\mu}{2}}}'e^{-\frac{\mu}{2}}$ is a scalar, we will get 
\begin{subequations}
\begin{align}
    & H_q(t) = e^{H_{cq} t}, \qquad \; \; \Gamma_q(t)= \Gamma(t) e^{-\frac{\mathcal{\mu}}{2}t} = E_1H_q(t)E_2,
    \\
    &  H_m(t) = e^{H_{cm} t}, \qquad  \Gamma_m (t)= \Gamma(t) e^{-\mathcal{\mu} t} = E_1H_m(t)E_2,
    \\
    & Q(t) = \int_{0}^{t} e^{-\mu s} \Gamma(s)' \bar{Q}_c \Gamma(s) ds = \int_0^{t} \Gamma_{q}(s)' \bar Q_c  \Gamma_{q}(s) ds, 
    \\
    & M(t) = \int_{0}^{t} e^{-\mu s} \Gamma(s)' \bar{M}_c ds = \int_0^{t}  \Gamma_m(s)' \bar M_c ds.
\end{align}    
\end{subequations}
\end{proof}
\begin{remark}
    Note that the differential equations introduced in~\eqref{eq:LQDiscretization-MatrixODEsystem} and~\eqref{eq:TimeDelayLQDiscretization-MatrixODEsystem} are identical when time delays are $\set{\tau_{ij}}=0$, and the time delay constants become $\bar m = 0$ and $V=0$. This corresponding to $B_{1c}=B_c$, $\bar{B}_{2c}=0$ and $H_{2c}=H_{3c}=0$. The expressions introduced in Proposition~\ref{prop:DiscretizationoftheLQOCPWithTimeDelays} become
    \begin{subequations}
    \begin{align}
        & A_v(t) = I,  &&  B_o(t) = \int_0^{t} A(s)B_c ds, 
        \\ 
        & H(t) = e^{H_ct}, &&\Gamma(t) = E_1 H(t) E_2 = e^{H_{1c}t},
        \\ 
        & H_q(t)= H(t) e^{-\frac{\mu}{2}t}, 
        &&
        \Gamma_q(t) = E_1 H_q(t) E_2= e^{(H_{1c}-\frac{\mu}{2})t},
        \\
        & H_m(t)= H(t) e^{-\mu t}, 
        &&
        \Gamma_m(t) = E_1 H_m(t) E_2= e^{(H_{1c}- \mu)t},
    \end{align}    
    where $B_o$, $\Gamma_q$ and $\Gamma_m$ have the same expressions as the non-delay discrete system matrices $B$, $H_q$ and $H_m$ introduced in~\eqref{eq:B_nonDelay} and~\eqref{eq:HqHm_nonDelay}.
    \end{subequations}
\end{remark}

\section{Numerical Methods of LQ Discretization}
\label{sec:NumericalMethods}
This section introduces numerical methods for solving proposed systems of differential equations. 
\subsection{Fixed-time-step ordinary differential equation method}
Consider an s-stage ODE method with the number of integration steps $N \in \mathbb{Z}^+$ and the time step $\delta t = \frac{T_s}{N}$. Note that $a_{i,j}$ and $b_i$ for $i=1,2,\ldots,s$ and $j=1,2,\ldots,s$ are the Butcher tableau's parameters of the ODE method. We have
\begin{subequations}
    \label{eq:ODEmethods-numericalExpressions}
    \begin{align}
         & A_{k+1} = \Lambda A_k,  && k \in \mathcal{N}, 
         \\
         & B_{1,k+1} =  B_{1,k} + \Theta_1 A_k \tilde B_{1c},  && k \in \mathcal{N}, 
         \\
         & A_{v,k+1} = \Lambda_v A_{v,k},  && k \in \mathcal{N}, 
         \\
         & B_{2,k+1} =B_{2,k} + \Theta_2 A_{v,k} \tilde B_{2c},   && k \in \mathcal{N}, 
         \\
         & H_{q,k+1} = \Omega_q H_{q,k},  
         && k \in \mathcal{N}, 
         \\
         & H_{m,k+1} = \Omega_m H_{m,k},  && k \in \mathcal{N}, 
         \\
         & M_{k+1} =  M_k + E_2' H_{m,k}' \tilde M_c, && k \in \mathcal{N}, 
         \\
         & Q_{k+1} = Q_k + E_2'H_{q,k}' \tilde Q_c H_{q,k}E_2, && k \in \mathcal{N}, 
    \end{align}
where 
\begin{align}
    & \tilde B_{1c}= \delta t B_{1c}, &&\tilde M_c = \delta t \sum_{i=1}^s b_i \Omega_{m,i}' E_1' \bar M_c, 
    \\
    & \tilde B_{2c} =  \delta t  \bar B_{2c},
    && \tilde Q_c = \delta t \sum_{i=1}^s b_i \Omega_{q,i}' E_1' \bar Q_c E_1 \Omega_{q,i}.
\end{align}
\end{subequations}

The coefficients $\Lambda$, $\Lambda_v$, $\Theta_1$, $\Theta_2$, $\Omega_m$ and $\Omega_q$ are functions of the Butcher tableau's parameters. They can be computed as 
\begin{subequations}
    \label{eq:ODEmethods-StageVariableCoefficients}
    \begin{alignat}{3}
        & A_{k,i} = A_k +  \delta t \sum_{j=1}^{s} a_{i, j} \dot A_{k,j} = \Lambda_{i} A_k,
        \\
        & B_{1,k,i} = B_{1,k} +  \delta t \sum_{j=1}^{s} a_{i,j} \dot B_{1,k,j} = B_{1,k} + \Theta_{1,i} A_k \tilde B_{1c},
        \\
        & A_{v,k,i} = A_{v,k} +  \delta t \sum_{j=1}^{s} a_{i, j} \dot A_{v,k,j} = \Lambda_{v,i} A_{v,k},
        \\
        & B_{2,k,i} = B_{2,k} +  \delta t \sum_{j=1}^{s} a_{i,j} \dot B_{2,k,j} = B_{2,k} + \Theta_{2,i} A_{v,k} \tilde B_{2c},
        \\
        & H_{m,k,i} = H_{m,k} +  \delta t \sum_{j=1}^{s} a_{i,j} \dot H_{m,k,j} = \Omega_{m,i} H_{m,k}, 
        \\
        & H_{q,k,i} = H_{q,k} +  \delta t \sum_{j=1}^{s} a_{i,j} \dot H_{q,k,j} = \Omega_{q,i} H_{q,k}, 
    \end{alignat}
\end{subequations}
\begin{algorithm}[bt]
\caption{Fixed-time-step ODE method}
\label{alg:ODEmethod-LQDiscretization}
\begin{flushleft}
    \textbf{Input:} $(A_c, B_c, C_c, D_c, \tau, Q_c, T_s, N)$ \\
    \textbf{Output:} $(A, B_o, Q, M)$ 
\end{flushleft}
\begin{algorithmic}
\State Compute the step size $ \delta t  = \frac{T_s}{N}$
\State Compute system matrices ($B_{1c}$,$B_{2c}$,$\bar M_c$,$\bar Q_c$,$H_{cq}$,$H_{cm}$) using~\eqref{eq:gamma(t)_timedelay}
\State Set initial states ($k=0$, $A_k = I$, $A_{v,k}=I$, $B_{1,k}=0$, $B_{2,k}=0$, $H_{m,k}=I_h$, $H_{q,k}=I_h$, $Q_k=0$, $M_k=0$)
\State Compute stage variable coefficients ($\Lambda_i$, $\Lambda_{v,i}$, $\Theta_{1,i}$, $\Theta_{2,i}$, $\Omega_{m,i}$ and $\Omega_{q,i}$) using~\eqref{eq:ODEmethods-StageVariableCoefficients}
\State Compute constant coefficients ($\Lambda$, $\Lambda_v$, $\Theta_1$, $\Theta_2$, $\Omega_m$ and $\Omega_q$) using~\eqref{eq:ODEmethods-VariableCoefficients}
\While{$k < N$}
    \State {Use \eqref{eq:ODEmethods-numericalExpressions} to update $(A_{k}, B_{1,k}, A_{v,k}, B_{2,k}, H_{m,k}, H_{q,k}, Q_{k}, M_{k})$} 
    \State {Set $k \leftarrow k + 1$}
\EndWhile
\State Set
 ($A=A_k$, $B_o=B_{1,k}+B_{2,k}$, $Q=Q_k$, $M=M_k$)
\end{algorithmic}
\end{algorithm}%
and 
\begin{subequations}
\label{eq:ODEmethods-VariableCoefficients}
    \begin{alignat}{3}
        & \Lambda = I +  \delta t  \sum_{i=1}^s b_i A_c \Lambda_i, \quad  && \Lambda_v = I +  \delta t  \sum_{i=1}^s b_i V A_c \Lambda_{v,i},
        \\
        & \Theta_1 =  \sum_{i=1}^s b_i\Lambda_{i}, && \Omega_m = I_h +  \delta t \sum_{i=1}^sb_i H_{cm} \Omega_{m,i},
        \\
        & \Theta_2 = \sum_{i=1}^s b_i\Lambda_{v,i}, && \Omega_q = I_h +  \delta t  \sum_{i=1}^s b_i H_{cq} \Omega_{q,i}, 
    \end{alignat}
\end{subequations}
where $\Lambda_i$, $\Lambda_{v,i}$, $\Theta_{1,i}$, $\Theta_{2,i}$, $\Omega_{m,i}$ and $\Omega_{q,i}$ are coefficients of stage variables $A_{k,i}$, $A_{v,k,i}$, $B_{1,k,i}$, $B_{2,k,i}$, $H_{m,k,i}$ and $H_{q,k,i}$. 

Consequently, we can obtain $A(T_s)=A_N$, $B_o(T_s)=B_{1,N}+B_{2,N}$, $Q(T_s)=Q_N$, $M(T_s)=M_N$ with constant coefficients $\Lambda$, $\Lambda_v$, $\Theta_{1}$, $\Theta_2$, $\Omega_m$ and $\Omega_q$ when using 
fixed-time-step ODE methods. Algorithm \ref{alg:ODEmethod-LQDiscretization} presents the fixed-time-step ODE method for the discretization of the discounted LQ-OCPs with time delays.

\subsection{Step-doubling method}
\begin{table}[tb]
    \centering
    \caption{Numerical expressions of the step-doubling method}%
    \label{tab:Stepdoubling-numericalExpressions}%
    \begin{tabular}{ p{0.9cm} p{2.6cm}  p{3.45cm}  }
    \hline
      & {Numerical expression}  & {Step-doubling function}    
    \\ \hline
    {$\Tilde{A}(N)$} & {$\bar{\Lambda}^N$}  & {$\Tilde{A}(\frac{N}{2}) \Tilde{A}(\frac{N}{2})$} 
    \\
    {$\Tilde{B}_o(N)$} & {$\displaystyle \sum_{i=0}^{N-1} \bar{\Lambda}^i$}  & {$\Tilde{B}_o(\frac{N}{2}) \left(I + \Tilde{A}(\frac{N}{2}) \right)$} 
    \\
    {$\Tilde{H}_m(N)$} & {$\Omega_m^N$} & {$\Tilde{H}_m(\frac{N}{2}) \Tilde{H}_m(\frac{N}{2})$} 
    \\
    {$\Tilde{H}_q(N)$} & {$\Omega_q^N$} & {$\Tilde{H}_q(\frac{N}{2}) \Tilde{H}_q(\frac{N}{2})$} 
    \\
    {$\Tilde{M}(N)$} & {$\displaystyle \sum_{i=0}^{N-1} \left(\Omega_m^i \right)'$} & {$\Tilde{M}(\frac{N}{2}) \left(I_{h} + \Tilde{H}_m(\frac{N}{2})' \right)$}
    \\
    {$\Tilde{Q}(N)$} & {$\displaystyle\sum_{i=0}^{N-1} \left( \Omega_q^i  \right)' \tilde{Q}_c \left( \Omega_q^i \right)$} & {$\Tilde{Q}(\frac{N}{2}) + \Tilde{H}_q(\frac{N}{2})' \Tilde{Q}(\frac{N}{2}) \Tilde{H}_q(\frac{N}{2})$} 
    \\ \hline
    \end{tabular}%
\end{table}%
\begin{algorithm}[tb]
\caption{Step-doubling method}
\label{algo:Stepdoubling-LQDiscretization}
\begin{flushleft}
    \textbf{Input:} $(A_c, B_c, C_c, D_c, \tau, Q_c, T_s, j)$ \\
    \textbf{Output:} $(A, B_o, Q, M)$ 
\end{flushleft}
\begin{algorithmic}
\State Compute the number of the integration step $N = 2^j$
\State Compute the step size $ \delta t = \frac{T_s}{N}$
\State Compute system matrices ($B_{1c}$,$B_{2c}$,$\bar M_c$,$\bar Q_c$,$H_{cq}$,$H_{cm}$) using~\eqref{eq:gamma(t)_timedelay}
\State Compute stage variable coefficients ($\Lambda_i$, $\Lambda_{v,i}$, $\Theta_{1,i}$, $\Theta_{2,i}$, $\Omega_{m,i}$ and $\Omega_{q,i}$) using~\eqref{eq:ODEmethods-StageVariableCoefficients}
\State Compute constant coefficients ($\Lambda$, $\Lambda_v$, $\Theta_1$, $\Theta_2$, $\Omega_m$ and $\Omega_q$) using~\eqref{eq:ODEmethods-VariableCoefficients}
\State Set initial states ($i=1, \Tilde{A}(i) = \bar \Lambda$, $\Tilde{B}_o(i) = I$,  $\Tilde{H}_m(i) = \Omega_m$,  $\Tilde{H}_q(i) = \Omega_q$, $\Tilde{M}(i) = I_{xu}$, $\Tilde{Q}(i) = \tilde{Q}_c$)
\While{$i \leq j$} 
    \State Update ($\Tilde{M}(i)$, $\Tilde{Q}(i)$) using equations from Table \ref{tab:Stepdoubling-numericalExpressions}
    \State Update ($\Tilde{A}(i)$, $\Tilde{B}_o(i)$,$\tilde H_{m}(i)$, $\tilde H_q(i)$) using equations from Table \ref{tab:Stepdoubling-numericalExpressions}
    \State Set $i \leftarrow i + 1$
\EndWhile
\State Use \eqref{eq:Stepdoubling_numericalExpressions} to compute 
 $(A, B_o, Q, M)$
\end{algorithmic}
\end{algorithm}
Consider the fixed-time-step ODE method with the integration step $N=2^j$ for $j\in \mathbb{Z}^+$ and the step size $\delta t=\frac{T_s}{N}$, the matrices 
\begin{subequations}
    \begin{align}
        & \Tilde{A}(N) = \bar{\Lambda}^N,                       && \Tilde{A}(1) = \bar{\Lambda}, 
        \\
        & \Tilde{B}_o(N) = \displaystyle \sum_{i=0}^{N-1} \bar{\Lambda}^i,  && \Tilde{B}(1) = I,       
        \\
        & \Tilde{H}_{m}(N) = \Omega_m^N ,                 && \Tilde{H}_{m}(1) = \Omega_m,    
        \\ 
        & \Tilde{H}_{q}(N) = \Omega_q^N ,                 && \Tilde{H}_{q}(1) = \Omega_q,    
        \\ 
        & \Tilde{M}(N) = \displaystyle \sum_{i=0}^{N-1} \left(\Omega_m^i \right)' , && \Tilde{M}(1) = I_{xu}, 
        \\
        & \Tilde{Q}(N) = \displaystyle \sum_{i=0}^{N-1} \left(\Omega_q^i \right)' \tilde{Q}_c \left(\Omega_q^i \right), && \Tilde{Q}(1) = \tilde{Q}_c, 
    \end{align}
    \label{eq:Stepdoubling_finalexpressions}
\end{subequations}
can be used to compute $(A, B_o, M, Q)$
\begin{subequations}
    \begin{align}
        & A(T_s) = \Tilde{A}(N)(1:n_x,1:n_x), && B_o(T_s) = \Theta_o \Tilde{B}_o(N) \tilde{B}_{oc}, \\ 
        & M(T_s) = E_2' \tilde{M}(N) \tilde{M}_c , && Q(T_s) = E_2' \Tilde{Q}(N) E_2,  
    \end{align}
    \label{eq:Stepdoubling_numericalExpressions}%
\end{subequations}
where $\bar \Lambda=\text{diag}(\Lambda,\Lambda_v)$, $\Theta_o=[\Theta_1, \Theta_2]$, $\tilde B_{oc}=[\tilde B_{1c}; \tilde B_{2c}]$ are constant. The coefficients $\Lambda$, $\Theta_1$, $\Lambda_v$, $\Theta_2$, $\Omega_m$, $\Omega_q$, $\tilde M_c$ and $\tilde Q_c$ are introduced in~\eqref{eq:ODEmethods-numericalExpressions} and~\eqref{eq:ODEmethods-VariableCoefficients}.

In~\cite{NewStepDoubling2010Nigham,ExponentialIntegrators2011Higham,ScalingSquaringRevisited2005Nigham}, a scaling and squaring algorithm is introduced for solving the matrix exponential problem. For $A(t)=e^{At}$, they use the $\frac{n}{2}^{th}$ step's result $A(\frac{n}{2} \delta t)$ as the initial state to compute the double step's result $A(n\delta t)=A(\frac{n}{2}\delta t)A(\frac{n}{2}\delta t)$ and repeat it until $n\delta t=T_s$. We can use the same idea to compute matrices introduced in~\eqref{eq:Stepdoubling_finalexpressions}, and it leads to the step-doubling method. Define $f(n)$ for $f\in \left[ \tilde A, \tilde B_o, \tilde H_m, \tilde H_q, \tilde M, \tilde Q\right]$, the step-doubling expression for computing $f(n)$ can be written as 
\begin{subequations}
    \begin{equation}
    f(1) \rightarrow f(2) \rightarrow f(4) \rightarrow \ldots \rightarrow f(\frac{N}{4})\rightarrow f(\frac{N}{2}) \rightarrow f(N),
    \label{eq:Stepdoubling_exampleofA(t)}
    \end{equation}
    where
    \begin{equation}
        f(n) = f(\frac{n}{2}) f(\frac{n}{2}), \qquad \qquad n \in [2,4, \ldots, \frac{N}{2}, N].
    \end{equation}
    \label{eq:Stepdoubling_numericalExpressionOfA(t)}%
\end{subequations}%
Table \ref{tab:Stepdoubling-numericalExpressions} describes the step-doubling expressions for ($\Tilde{A}$, $\Tilde{B}_o$, $\Tilde{H}_m$, $\Tilde{H}_m$, $\Tilde{M}$, $\Tilde{Q}$). The step-doubling method takes only $j$ steps to get the same result as the fixed-time-step ODE method with $N=2^j$ integration steps. Algorithm \ref{algo:Stepdoubling-LQDiscretization} describes the step-doubling method for the discretization of the discounted LQ-OCPs.

\subsection{Matrix exponential method}
Based on the formulas described in~\cite{Moler1978NineteenDubiousWays,Moler2003NineteenDubiousWays25YearsLater,vanLoan1978MatrixExponential}, we can obtain ($A$, $B_o$, $M$, $Q$) by solving the following matrix exponential problems 
\begin{subequations}
\label{eq:matrixExponentials}
    \begin{alignat}{5}
        \begin{bmatrix}
            \Phi_{1, 11} & \Phi_{1, 12}\\
            0 & \Phi_{1,22}
        \end{bmatrix} &= \text{exp} \left(\begin{bmatrix}
                            -H_{cq}' & E_1' \bar Q_c E_1 \\
                            0   & H_{cq}
                        \end{bmatrix}t \right), 
        \\
        \begin{bmatrix}
            I & \Phi_{2, 12}\\
            0 & \Phi_{2,22}
        \end{bmatrix} &= \text{exp} \left(\begin{bmatrix}
                            0 & I \\
                            0 & H_{cm}'
                        \end{bmatrix}t \right), 
        \\
        \Phi_{3} &= \text{exp} \left(H_c t \right), 
    \end{alignat}
and the elements are
    \begin{align}
        & \Phi_{1,12} = H_q(-t)' \int_{0}^{t} H_q(s)' E_1' \bar{Q}_c E_1 H_q(s) ds,
        \\
        & \Phi_{1,22} = H_q(t) = e^{H_{cq}t}, 
        \\
        & \Phi_{2,12} = \int_0^t H_{m}(s) ds,
        \\
        & \Phi_{3} = H(t) = \text{diag}(H_1(t),H_2(t),H_3(t)),
    \end{align}
\end{subequations}
where the matrices $H_c$, $H_{cq}$, $H_{cm}$, $\bar Q_c$, $H_q$, $H_m$, $H_1$, $H_2$ and $H_3$ are introduced in Proposition~\ref{prop:DiscretizationoftheLQOCPWithTimeDelays}. 

Set $t=T_s$, we can compute ($A$, $B_o$, $M$, $Q$) as 
\begin{subequations}
\begin{align}
    & A = \Gamma(1:n_x, 1:n_x),
    && B_o = \Gamma(1:n_x, n_x+1:end), 
    \\
    & M = E_2' \Phi_{2,12} \bar M_c,
    && Q = E_2' \Phi_{1,22}' \Phi_{1,12} E_2,
\end{align}    
where $\Gamma = E_1\Phi_3E_2$.
\end{subequations}

\section{Numerical Experiments}
\label{sec:NumericalExperiments}
In this section, we will test and compare the proposed three numerical methods.

Consider a MIMO input-output model $Z(s)=G(s)U(s)$ with the following transfer functions 
\begin{equation}
    G(s) = \begin{bmatrix}
        \frac{1}{(1.5s+1)(3s+1)}e^{-0.1s} & \frac{-2(2s+1)}{3.4s+1}e^{-1.6s} \\ 
        \frac{-0.5}{2.3s+1}e^{-2.0s} & \frac{2.4}{(1.7s+1)(0.9s+1)}e^{-0.9s} 
    \end{bmatrix}.
\end{equation}
We can convert the above transfer functions into the state space models introduced in ~\eqref{eq:Deterministic-ContinuousTimeDelayStateSpaceMIMO}. The state space matrices are in observable canonical form. 

Consider the discounted LQ-OCP described in~\eqref{eq:ContinuousTime-DiscountedLQOCP-TimeDelay}. The weight matrix $Q_c=\text{diag}(1.0,2.0)$, the discount factor $\mu=0.2$, the control horizon $T=20$ [s] and the sampling time $T_s=1$ [s]. We use the symbolic toolbox in Matlab to compute the discrete system matrices ($A$, $B_o$, $Q$, $M$) with their analytic expressions described in Proposition~\ref{prop:DiscretizationoftheLQOCPWithTimeDelays}.  The results are used as the true solution for comparing the results from proposed numerical methods. 

Fig. \ref{fig:ErrorPlotsandCPUTime_LQDiscretization} describes the error and CPU time of the fixed-time-step ODE and the step-doubling method. We notice that the step-doubling method (dot plots) has the same error as the fixed-time-step ODE method (line plots) with the same discretization scheme. All tested methods have the correct convergence order (indicated by dashed lines). The CPU time of the two methods is indicated in the bar plots. We observe that the CPU time of the fixed-time-step ODE method increases as the integration step or the stage number increases. The step-doubling method's CPU time is stable at around 1.8 [ms]. 
\begin{figure}[t]
    \centering
    \includegraphics[width=0.49\textwidth]{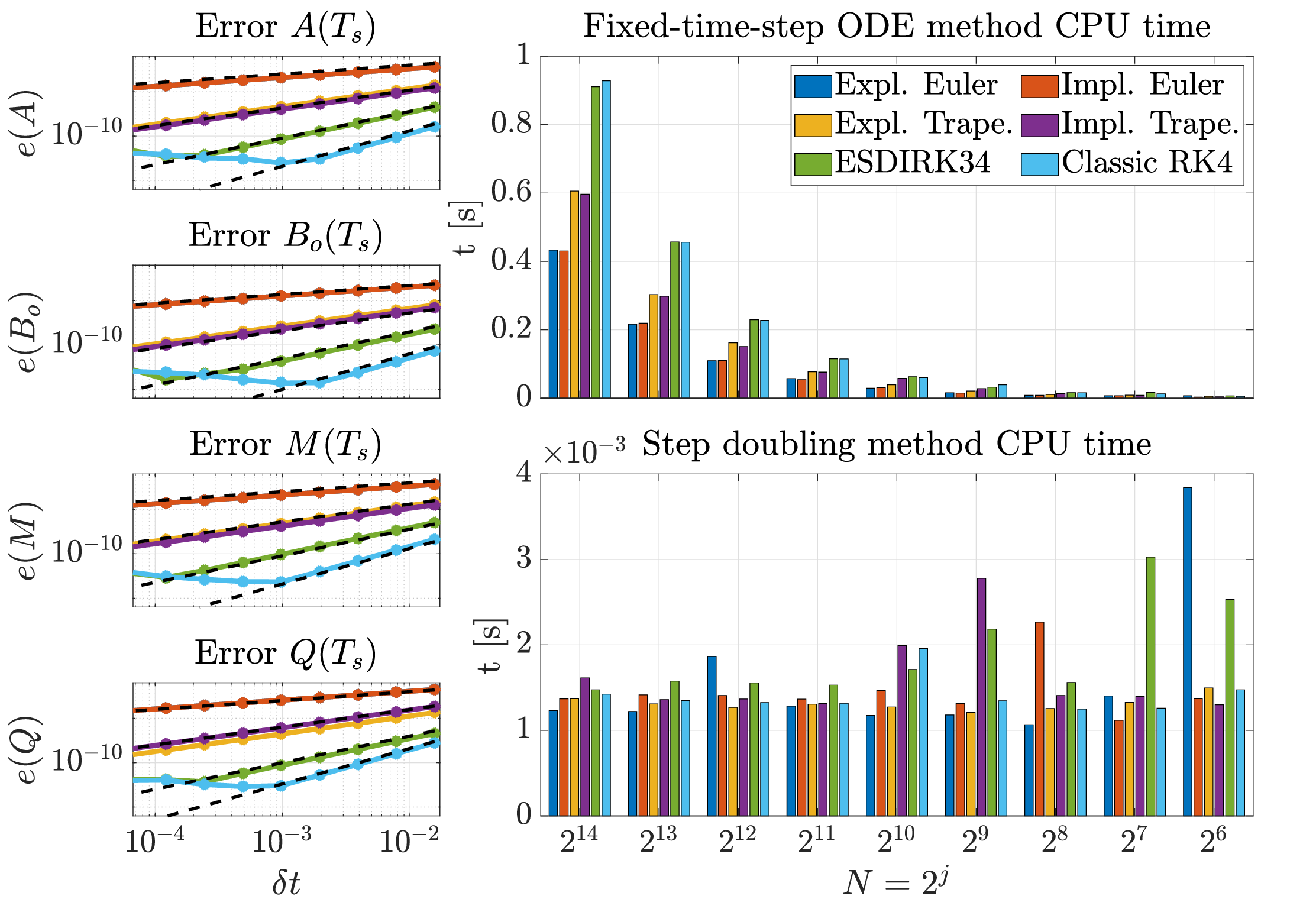}
    \caption{The error and CPU time of the fixed-time-step ODE method and the step-doubling method with different discretization schemes. The error is $e(i) = \norm{i(T_s) - i(N)}_{\infty}$ for $ i \in [A, B_o, M, Q]$, where 
    $i(T_s)$ is the result from the Matlab's symbolic toolbox.}
    \label{fig:ErrorPlotsandCPUTime_LQDiscretization}
\end{figure}
\begin{table}[b]
\centering
    \caption{CPU time and error of the scenario using classic RK4 with $N=2^{10}$}%
    \label{tab:NumericalExperiment-ErrorsandCPUTime}
    \begin{tabular}{ccccc}
    \hline
                & Unit & Matrix Exp. & \multicolumn{1}{l}{ODE Method} & \multicolumn{1}{l}{Step-doubling} \\ \hline
    $e(A)$      & [-] & $3.34 \cdot 10^{-16}$         & $9.12 \cdot 10^{-14}$         & $9.12 \cdot 10^{-14}$             
    \\
    $e(B_o)$    & [-] & $5.56 \cdot 10^{-17}$           & $8.33 \cdot 10^{-12}$   & $8.33 \cdot 10^{-12}$             
    \\
    $e(M)$      & [-] & $5.10 \cdot 10^{-16}$           & $6.88 \cdot 10^{-14}$   & $6.88 \cdot 10^{-14}$             
    \\
    $e(Q)$      & [-] & $8.10 \cdot 10^{-16}$            & $2.55 \cdot 10^{-13}$   & $2.55 \cdot 10^{-13}$             
    \\
    CPU Time    & [ms] & 44.1     & 60.3                         & 1.96      \\ \hline
    \end{tabular}
\end{table}

Table \ref{tab:NumericalExperiment-ErrorsandCPUTime} describes the error and CPU time of all three methods when using the classic RK4 with $j=10$ and $N=2^j$ (not for the matrix exponential). The matrix exponential method obtains the most precise result among all methods, while the other two methods have the same error. The fixed-time-step ODE method is the slowest, followed by the matrix exponential method. They spend 60.3 [ms] and 44.1 [ms], respectively. The step-doubling method is the fastest and only takes 1.96 [ms].

\section{Conclusions}
\label{sec:Conclusion}
This paper discussed the discretization of discounted LQ-OCPs with and without time delays. In the Propositions, the discrete system matrices of the discounted LQ-OCPs are described as systems of differential equations. Then, we introduced three numerical methods for solving the proposed systems of differential equations. All three numerical methods are tested and compared in the numerical experiment. Our results indicate that
\begin{itemize}
    \item [1.] All three methods can solve the proposed differential equations, and the matrix exponential method is the most precise. 
    \item [2.] The step-doubling method is significantly faster than
    the other two methods, keeping the same accuracy level as the fixed-time-step ODE method.
\end{itemize}





\addtolength{\textheight}{-12cm}   






\bibliographystyle{IEEEtranS}
\bibliography{reference}

\end{document}